\newtheorem{thm}{Theorem}[section]
\newtheorem{lem}[thm]{Lemma}
\newtheorem{defin}[thm]{Definition}
\theoremstyle{definition}
\newtheorem{expl}[thm]{Example}
\newtheorem{remark}[thm]{Remark}
\newcommand{\dom}{\textnormal{dom}}
\newcommand{\Int}{\textnormal{int}}
\newcommand{\Ext}{\textnormal{Ext}}
\newcommand{\R}{\mathbb{R}}
\newcommand{\wt}{\widetilde}
\title[Topological properties of sets represented by an inequality]{Topological properties of sets represented by an inequality involving 
distances} 
\author{Daniel Reem}
\address{\noindent IMPA - Instituto Nacional de Matem\'atica Pura e Aplicada, Estrada Dona Castorina 110, Jardim Bot\^anico, CEP 22460-320, Rio de Janeiro, RJ,  Brazil.} 
\email{\noindent dream@impa.br }
\keywords{boundary, closure, interior, strong triangle inequality, 
uniformly convex normed space, Voronoi cell.}
\subjclass[2010]{46B20, 68U05, 46N99, 65D18} 
\date{April 29, 2013}  
\begin{document}
\maketitle
\begin{abstract}
Consider a set represented by an inequality. An interesting phenomenon which 
occurs in various settings in mathematics is that the interior of this set is the 
subset where strict inequality holds, the boundary is the subset where equality holds, 
and the closure of the set is the closure of its interior. 
This paper discusses this phenomenon assuming the set is a Voronoi cell induced 
by given sites (subsets), a geometric object which appears  
in many fields of science and technology and has diverse applications. 
Simple counterexamples show that the discussed phenomenon does not hold in general, 
but it is established in a wide class of cases. More precisely, 
the setting is a (possibly infinite dimensional) uniformly 
convex normed space with arbitrary positively separated sites. 
An important ingredient in the proof is a strong version of 
the triangle inequality due to Clarkson (1936), an interesting inequality which 
has been almost totally forgotten. 
\end{abstract}

\section{\bf Introduction}\label{sec:Introduction}
\subsection{Background: }\label{subsec:Background}  Consider a set represented by an inequality. An intuitive rule of thumb says that its interior 
is the set where strict inequality holds, its boundary is the set where equality holds, 
and the closure of the interior is the closure of the set itself.  
This intuition probably comes from familiar and simple examples in $\R^n$  such as 
balls, halfspaces, and polyhedral sets, or the ones described in \cite[p. 192]{Fulks1961},\cite[p. 6]{Lawson2003}. 
Another well known example is the case of level sets of  convex functions. 
Given a convex function $f:\R^n\to\R$, denote 
its so-called 0-level set by 
\begin{equation}\label{eq:0Level}
S:=f^{\leq 0}:=\{x\in\R^n: f(x)\leq 0\}.
\end{equation}
If the so-called Slater's condition 
 holds \cite[p. 325]{Bertsekas1999},\cite[p. 44]{BorweinLewis2006}, \cite{Slater1950},  \cite[p. 98]{VanTiel1984}, 
namely that $f(x_0)<0$ for some $x_0\in \R^n$, then the  interior of $S$ 
is $f^{<0}:=\{x\in\R^n: f(x)<0\}$. See, for instance, \cite[p. 59]{Rockafellar1970}. 
This property can be easily generalized to any topological vector space when $f$ is assumed to be 
continuous \cite[pp. 80, 117]{VanTiel1984}. 
An additional closely related well-known result says that the closure of a convex 
set $C$ whose (relative) interior  is nonempty is the (relative) closure of the (relative) interior of the set 
\cite[pp. 8-9]{BorweinLewis2006},\cite[p. 114]{KellyWeiss1979},\cite[p. 46]{Rockafellar1970} ($\R^n$),  
\cite[pp. 29-30]{VanTiel1984} (topological vector space). This property can be expressed 
as $\overline{C}=f^{\leq 1}=\overline{f^{<1}}$,  
where $f(x)=p(x-x_0)$, $x_0\in \Int(C)$ is given, and $p$ is the Minkowski functional (the gauge) associated 
with $C-x_0$ (this is a consequence of  \cite[Theorem 2.21, Remark 2.22(a), p. 27]{VanTiel1984}  
and \cite[Theorem 2.27, p. 29]{VanTiel1984}). 

Yet another example, which is related to the second one mentioned above, is the 
case of star bodies. These objects are important ones in the geometry 
of numbers theory \cite{Cassels1997,DodsonKristensen2006,GruberLek,Mahler1946,Mahler1946II,Minkowski1911,Mordell1945}.  
A set $K\subseteq \R^n$ is called a star body whenever there exists a  
function $f:\R^n\to \R$, called a distance function, such that 
$K=f^{\leq 1}$ and $f$ has the following properties: 
it is continuous, nonnegative, does not coincide with the zero function, and finally, 
for all $t\geq 0$ and $x=(x_k)_{k=1}^n\in \R^n$ the equality $f(tx)=tf(x)$ holds. 
(Sometimes one can find slightly different formulations of the definition 
of star bodies, such as in \cite{Mahler1946} where it is 
assumed that $f(tx)=|t|f(x)$ for all $t\in\R$ and $x\in\R^n$. However, in 
common scenarios all of these formulations are essentially equivalent.) 
Typical examples of distance functions which illustrate the richness of this 
class of functions are 
\begin{equation*}
f(x)=C\left(\prod_{k=1}^n |x_k|^{p_k}\right)^{1/\sum_{k=1}^n p_k}\quad \textnormal{and}\quad 
f(x)=\left|\sum_{k=1}^r C_k|x_k|^{p}-\sum_{k=r+1}^n C_k|x_k|^{p}\right|^{1/p} 
\end{equation*}
and multiplications, additions or subtractions of such functions (with suitable 
powers and absolute values). Here $C,p,p_1,\ldots,p_n$ are given positive numbers, $C_1,\ldots,C_k$ are non-negative 
numbers such at least one of them is positive,  and $r$ is a given integer in $[1,n]$ 
(and $\sum_{k=r+1}^n p_k:=0$ when $r=n$). If, as in \cite{Siegel1989}, 
only distance functions obtained from norms are considered, then the 
corresponding star body is bounded. However, in general it may not be bounded as in 
the case where the distance function is $f:\R^2\to\R$ defined 
by $f(x_1,x_2)=|x_1^2-x_2^2|^{1/2}$. 

The subset $f^{=1}$ is called the boundary of $K$ and the subset $f^{<1}$ is called 
the interior of $K$. These subsets play an important role in the theory of the geometry of numbers 
in the context of evaluating the number of lattice points of certain lattices (critical lattices) 
in certain regions. It turns out that they coincide with their topological colleagues, 
and moreover, $f^{\leq 1}=\overline{f^{<1}}$. See, e.g., \cite[pp. 105-107]{Cassels1997} for 
the simple proof of this claim and related ones. 
These properties can be generalized to star bodies contained in topological vector spaces.

\subsection{The phenomenon discussed in this paper: }\label{subsec:IntroductionPhenomenon}  This paper discusses a phenomenon 
similar to the one described above, where now the set $S$ has the form of a Voronoi cell 
(or a dominance region) of a given site $P$ with respect to another given 
site $A$, namely 
\begin{equation}\label{eq:dom}
S=\{x\in X: d(x,P)\leq d(x,A)\}=:\dom(P,A).
\end{equation}
The sites $P$ and $A$ are nothing but nonempty subsets contained in a convex subset $X$ 
which is contained in a normed space, $d(x,P)=\inf\{d(x,p): p\in P\}$,  
and $d$ is the distance function induced by the norm. The sites 
are assumed to be positively separated, that is, 
\begin{equation}\label{eq:d(P,A)}
d(P,A):=\inf\{d(p,a): p\in P, a\in A\}>0.
\end{equation}
The set $S$ (which is nonempty since $P\subseteq S)$ can be represented  in the form \eqref{eq:0Level} 
where $f:X\to \R$ is defined by $f(x)=d(x,P)-d(x,A)$ for all $x\in X$, but in general this function is not convex 
and hence one cannot conclude in advance that the above mentioned phenomenon 
holds for it (see also the paragraph after the next one). 

 The Voronoi cell is the basic component in what is known as the Voronoi diagram, 
 a geometric structure which appears in many fields in mathematics, science, and technology 
and has diverse applications \cite{Aurenhammer,ConwaySloane,CSKM2013,VoronoiCVD_Review,VoronoiWeb,GruberLek,OBSC}. 
Given a tuple of nonempty sets $(P_k)_{k\in K}$ 
called the sites or the generators, the Voronoi cell (or Voronoi region) of 
the site $P_k$ is the set of all points in the space 
whose distance to $P_k$ is not greater than their distance to the other sites $P_j,j\neq k$. In other 
words, the Voronoi cell of $P_k$ is nothing but the dominance region $R_k=\dom(P_k,A_k)$ where 
$A_k=\cup_{j\neq k}P_j$. The Voronoi diagram is the tuple of Voronoi cells $(R_k)_{k\in K}$. 
See Figures \ref{fig:BoundaryInteriorClassicalVoronoi6Site}-\ref{fig:BoundaryInterior4Site2PerSiteL2718281828} 
for a few illustrations. 
Voronoi diagrams have been the subject of an investigation for more than 160 years, 
starting formally with Dirichlet \cite{Dirichlet} and Voronoi \cite{Voronoi1908} in the 
context of the geometry of numbers and informally with Descartes (astronomy) or even before. 
They have been considerably investigated during the last 40 years, 
but mainly in the case of point sites in finite dimensional Euclidean spaces 
(frequently only in $\R^2$ or $\R^3$). Not much is known about them in other 
settings, e.g., in the case of non-Euclidean norms or sites having a general form 
(but some works studying them there do exist: see, e.g., the discussion below, the discussion 
in Example \ref{ex:ParallelSegment},  
and some references mentioned in \cite{ReemVorStabilityNonUC2012}).   
Papers studying them  in an infinite dimensional setting seem to be 
rare at the moment \cite{KopeckaReemReich,ReemISVD09,ReemGeometricStabilityArxiv}. 

The main result of this paper is Theorem \ref{thm:InteriorBoundary} which establishes 
that under the above mentioned assumptions on the sites, if the normed space belongs to the 
wide class of uniformly convex spaces (see Section \ref{sec:UniConvTraingle}), then 
\begin{equation}
\partial(\dom(P,A))=\{x\in X: d(x,P)=d(x,A)\},\label{eq:boundary}
\end{equation}
\begin{equation}
\Int(\dom(P,A))=\{x\in X: d(x,P)<d(x,A)\},\label{eq:interior} 
\end{equation}
\begin{equation}
\dom(P,A)=\overline{\{x\in X: d(x,P)<d(x,A)\}},\label{eq:closure}
\end{equation}
where $\partial (S)$, $\Int(S)$, and $\overline{S}$ are respectively the boundary, interior, and closure of the 
set $S$ (with respect to $X$). The property expressed in \eqref{eq:boundary}-\eqref{eq:closure} 
 may seem intuitively clear at first glance, and even 
true in any metric space, but simple counterexamples show that this is not the case even in $\R^2$ 
(see Example ~\ref{ex:ParallelSegment}, Figure ~\ref{fig:BoundaryInteriorLargeBisectorSmallStrictInequality}, 
and Remark ~\ref{rem:f_closure_boundary}). 
The set $\{x\in X: d(x,P)=d(x,A)\}$ is called the bisector of $P$ and $A$, and the property 
expressed in \eqref{eq:boundary} implies that a bisector of two positively separated sites 
in a uniformly convex space cannot be ``fat'', that is, it cannot contain 
a ball (see Figure \ref{fig:BoundaryInteriorLargeBisectorSmallStrictInequality} for a counterexample). 

In the case of finite dimensional uniformly convex spaces
 (i.e., finite dimensional strictly convex spaces) 
 \eqref{eq:boundary}-\eqref{eq:closure} are essentially known, at least in 
 the 2-dimensional case with certain sites (e.g., points or polygonal sets), 
 but it seems that only recently these properties have been established, 
 in a closely related formulation,  in the  case of general dimension and general 
 closed sites  \cite[Lemma 6]{ImaiKawamuraMatousekReemTokuyamaCGTA}. A somewhat 
 related discussion \cite{Horvath2000} in the case of point sites in a finite dimensional 
 real strictly convex normed space implies that the induced bisector is homeomorphic to a hyperplane. 
 When the space is not strictly convex, then  \eqref{eq:boundary} does not necessarily hold 
 as is known for a long time, since the bisectors can be fat or strange: See the discussion 
 in Example \ref{ex:ParallelSegment}. In this connection it is interesting to note that it is 
 known for a long time that  the convexity of the Voronoi cell of two point sites $P$ and $A$ 
 (or the fact that the bisectors are hyperplanes) characterizes  the Euclidean norm 
  \cite{Day1947,Gruber1974I,Gruber1974II,Mann1935,Woods1969}. This can be generalized 
 to the case of Voronoi cells induced by lattices \cite{Gruber1974I,Gruber1974II,Mann1935}. 
 See also \cite{MartiniSwanepoel2004} regarding related properties of bisectors induced by 
 two point sites in finite dimensional normed spaces.

Another result related to  \eqref{eq:boundary}-\eqref{eq:closure} is 
\cite[Lemma 5.1, Lemma 5.2]{KopeckaReemReich}. The setting  
now is any (possibly infinite dimensional) uniformly convex space. In its simple 
version the result says that $\dom(P,A)$ is homeomorphic to a closed and bounded 
convex set whenever $P=\{p\}$ where $p$ is a point contained in the (relative) 
interior of $X$ and whenever $X$ is closed and bounded. 
This generalizes the well-known fact that in the Euclidean norm the Voronoi cell of a point site 
is convex. One cannot expect to generalize the above in a naive way to the case where $P$ is general, 
 since now   $\dom(P,A)$ is not necessarily connected (see Figure \ref{fig:BoundaryInterior4Site2PerSiteL2718281828}). 
 
\subsection{Issues related to the proof: }  The main difficulty in trying to 
establish \eqref{eq:boundary}-\eqref{eq:closure} 
 in the general case of infinite dimensional spaces and general sites $P$ and $A$ is 
 the fact 
that the distance between a point and a set is not necessarily attained even if the set is closed. 
The reason why this property is so helpful is because it gives one candidates for satisfying some 
desired conditions, candidates which are absent in the general case. The idea is 
to look for the candidates along a certain interval, somewhat similarly to the case of 
proving that the closure of a the (nonempty) interior of a convex set is the closure of 
the set itself (Subsection \ref{subsec:Background}). 
More specifically, if $z\in X$ is on the bisector of $P$ and $A$, i.e., 
it satisfies the equation $d(z,P)=d(z,A)$, then because of \eqref{eq:boundary} 
one needs to show in particular that in every 
neighborhood of $z$ there are points outside $\dom(P,A)$, i.e., points $x\in X$ satisfying $d(x,A)<d(x,P)$. 
If $d(z,A)$ is attained at some point $a\in A$, then one may guess that the needed points $x$ can be taken 
from the segment $[a,z)$ near the endpoint $z$. It turns out that this is indeed true because of 
the uniform (actually strict) convexity of the space as shown in \cite[Lemma 6]{ImaiKawamuraMatousekReemTokuyamaCGTA} 
and this is true even if $d(P,A)>0$ is weakened to $P\cap A=\emptyset$. 
Unfortunately, as mentioned above, in infinite dimensional spaces the distance between a point and a general set is not 
necessarily attained.

The way the above mentioned difficulty is treated here 
is by using the assumption that $d(P,A)$ is positive and by using a nice and interesting improvement of the triangle 
inequality, due to Clarkson \cite[Theorem 3]{Clarkson}. This strong triangle 
inequality (as called in \cite{Clarkson}) allows one, 
after a suitable selection of certain parameters,  to derive explicit geometric estimates
 which show that if the above mentioned point $z$ is in the interior of $\dom(P,A)$, 
 then a contradiction must occur. 
Interestingly, although the strong triangle inequality was formulated in the 
very famous paper \cite{Clarkson} of Clarkson, it has been almost totally forgotten (in contrast to 
the well-known Clarkson's inequalities for $L_p$ spaces \cite[Theorem 2]{Clarkson}). Actually, 
in spite of a comprehensive search the author has made, evidences to its existence were found only 
in \cite{Clarkson} and later in  \cite{Plant,SmithTurett1990}. It will not be surprising 
if additional references will be found, but it seems that this strong triangle 
inequality is far from being a mainstream knowledge. 
Recently this inequality has been used for proving 
another property of Voronoi cells, namely their geometric stability with 
respect to small changes of the sites \cite{ReemGeometricStabilityArxiv} (see also 
Subsection \ref{subsec:Applications}). The derivation of 
\cite[Lemma 5.1, Lemma 5.2]{KopeckaReemReich} mentioned above is based indirectly on the strong 
triangle inequality via some of the results established in  \cite{ReemGeometricStabilityArxiv}.

\subsection{Possible applications: } \label{subsec:Applications} It may be of some interest to mention a few possible 
applications of the main result. 
One application is related to the geometric stability of Voronoi cells with 
respect to small changes of the sites. As shown in \cite{ReemGeometricStabilityArxiv}, 
a small perturbation of the sites, measured using the Hausdorff distance, yields 
a small perturbation in the Voronoi cells, measured again using the Hausdorff distance. 
In several applications related to Voronoi diagrams, such as in robotics \cite{SchwartzSharir1987}, 
the bisectors of the cells are important. Hence one may ask whether the bisectors are 
geometric stable under small perturbations of the sites. It turns out that  
 \eqref{eq:boundary} enables one to deduce this, assuming no neutral Voronoi region exists, 
 i.e., the union of the Voronoi cells is the whole space 
(this always holds when finitely many sites are considered and also holds in many scenarios involving 
infinitely many sites, such as the case of lattices; however, in general a neutral region can exist). 
The proof is essentially 
as the proof of \cite[Corollary 5.2]{ReemVorStabilityNonUC2012} (despite the somewhat different setting).  
This issue will be discussed in a revised version of \cite{ReemGeometricStabilityArxiv} which is 
is planned to be uploaded onto the arXiv soon. 

%
%

Another application of the main result is as an auxiliary tool in the proof that 
a certain iterative scheme involving sets converges to certain geometric objects. 
 These objects are variations of the concept of Voronoi diagram and formally 
 they are defined as the solution of a fixed point equation on a product space of sets. 
 The pioneering work of Asano, Matou\v{s}ek, and Tokuyama \cite{AMTn} (earlier announcements appeared in \cite{AMT2}) 
introduced and discussed an important member in this interesting family of objects. 
This object, called  ``a zone diagram'', was studied in \cite{AMTn} in the case of the Euclidean plane 
with finitely many point sites. An iterative scheme for approximating 
it was suggested there. 

Soon after \cite{AMTn}, in an attempt to better understand zone diagrams, 
the concept of ``a double zone diagram''  was introduced and studied 
in \cite{ReemReichZone} in a general setting ($m$-spaces: a setting which is more 
general than metric spaces). However, no way to 
approximate this object was suggested. Recently \cite{ReemZoneCompute} it has been shown that the algorithm 
suggested by Asano, Matou\v{s}ek, and Tokuyama converges in a rather general setting 
(a class of geodesic metric spaces which contains Euclidean spheres and finite dimensional uniformly convex spaces 
and infinitely many sites of a general form) to a double zone diagram, and sometimes also to a 
zone diagram. An important part in the proof was  to establish  
\eqref{eq:closure} in the  above mentioned setting. A careful inspection of the whole proof shows 
that in order to generalize the convergence to infinite dimensional uniformly convex normed spaces it is 
sufficient to prove \eqref{eq:closure} there and to make some 
modifications in certain additional auxiliary tools. This issue, which is a work in progress, 
will be discussed elsewhere. 
%
%

\subsection{Paper layout: } The paper is laid as follows. In Section \ref{sec:UniConvTraingle} 
 the concept of uniformly convex normed spaces is recalled and the 
 strong triangle inequality of Clarkson is presented.  The main result is established in 
 Section \ref{sec:UniformlyConvex}. In Section \ref{sec:Counterexamples}  a few 
  examples and counterexamples related to the main result are discussed. 
  Section \ref{sec:ConcludingRemarks} concludes the paper. 

\section{Uniformly convex spaces and the strong triangle inequality}\label{sec:UniConvTraingle} 
This section recalls the concept of uniformly convex normed spaces and presents the 
 strong triangle inequality of Clarkson. 
 
\begin{defin}\label{def:UniformlyConvex}\label{page:UniConvDef}
A normed space $(\widetilde{X},|\cdot|)$ is said to be uniformly convex if for each $\epsilon\in (0,2]$ there exists $\delta\in (0,1]$ such that for all $x,y\in \wt{X}$ satisfying $|x|=|y|=1$, if $|x-y|\geq \epsilon$, then $|(x+y)/2|\leq 1-\delta$.
\end{defin}
Typical examples of uniformly convex spaces are inner product spaces, the sequence spaces   $\ell_p$, the Lebesgue spaces $L_p(\Omega)$ ($1<p<\infty$), and a uniformly convex product of finitely many uniformly convex spaces. 
  The spaces  $\ell_1,\ell_{\infty},L_1(\Omega),L_{\infty}(\Omega)$ are typical examples of spaces which 
  are  not uniformly convex. See  \cite{BL2000,Clarkson,GoebelReich,LindenTzafriri,Prus2001} for more information.  

From the definition of uniformly convex spaces it is possible to obtain a function which 
assigns to the given $\epsilon$ a corresponding value $\delta(\epsilon)$. There are several  ways to obtain such a function,  but for the purposes of this paper $\delta$ should be increasing and to satisfy $\delta(0)=0$ and $\delta(\epsilon)>0$ for all $\epsilon\in (0,2]$. A familiar choice, which is not necessarily the most convenient one, is the modulus of convexity   
\begin{equation*}
\displaystyle{\delta(\epsilon)=\inf\{1-|(x+y)/2|: |x-y|\geq \epsilon,\,|x|=|y|=1\}}.\label{eq:delta}
\end{equation*}
For formulating the strong triangle inequality the definition of Clarkson's angle should be recalled. 
\begin{defin}\label{def:angle}
Given two non-zero vectors $x,y$ in a normed space, the angle (or Clarkson's angle, or the normed angle) $\alpha(x,y)$ between them is the distance between their directions, i.e., it is defined by
\begin{equation*}
\alpha(x,y)=\left|\frac{x}{|x|}-\frac{y}{|y|}\right|.
\end{equation*}
\end{defin}
\begin{thm}\label{thm:Clarkson}{\bf (Clarkson \cite[Theorem~3]{Clarkson})}
Let $x_1,x_2$ be two non-zero vectors in a uniformly convex normed space $(\widetilde{X},|\cdot|)$. If $x_1+x_2\neq 0$, then 
\begin{equation}\label{eq:ClarkTriangle}
|x_1+x_2|\leq |x_1|+|x_2|-2\delta(\alpha_1)|x_1|-2\delta(\alpha_2)|x_2|,
\end{equation}
where $\alpha_l=\alpha(x_l,x_1+x_2),\,l=1,2$.
\end{thm}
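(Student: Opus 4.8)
The plan is to reduce the whole inequality to a single application of the defining property of uniform convexity, by comparing each of $x_1$ and $x_2$ separately with the common direction $u := (x_1+x_2)/|x_1+x_2|$, which is well defined because $x_1 + x_2 \neq 0$. Write $e_l := x_l/|x_l|$ for $l = 1,2$; these are unit vectors, and $\alpha_l = |e_l - u|$ by Definition \ref{def:angle}.

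First I would record the elementary consequence of Definition \ref{def:UniformlyConvex} that for any two unit vectors $a$ and $b$ one has $|a + b| \leq 2 - 2\delta(|a - b|)$. When $|a-b| > 0$ this is exactly the definition of uniform convexity (equivalently, it follows at once from the displayed formula for the modulus of convexity), and when $a = b$ it holds because $\delta(0) = 0$. Applying this with $a = e_l$, $b = u$ and multiplying by $|x_l| \geq 0$ gives
\[
|\,x_l + |x_l|\,u\,| \;=\; |x_l|\,|e_l + u| \;\leq\; 2|x_l|\bigl(1 - \delta(\alpha_l)\bigr), \qquad l = 1,2.
\]

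The second and essentially only idea is a bookkeeping identity: since $x_1 + x_2 = |x_1 + x_2|\,u$,
\[
(x_1 + |x_1|\,u) + (x_2 + |x_2|\,u) \;=\; (x_1 + x_2) + (|x_1| + |x_2|)\,u \;=\; \bigl(|x_1 + x_2| + |x_1| + |x_2|\bigr)\,u .
\]
Taking norms, the right-hand side has norm exactly $|x_1 + x_2| + |x_1| + |x_2|$ since $u$ is a unit vector and the scalar in front is nonnegative, while the left-hand side is at most $|\,x_1 + |x_1|u\,| + |\,x_2 + |x_2|u\,|$ by the triangle inequality, hence at most $2|x_1|(1 - \delta(\alpha_1)) + 2|x_2|(1 - \delta(\alpha_2))$ by the previous step. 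Cancelling $|x_1| + |x_2|$ from both sides produces precisely \eqref{eq:ClarkTriangle}.

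I do not expect a genuine obstacle here: the entire content is the decomposition $x_l = |x_l|\,e_l$ together with the device of inserting $\pm(|x_1| + |x_2|)\,u$ so that uniform convexity can be invoked once for each summand against the shared direction $u$. The only points requiring a word of care are the degenerate cases $\alpha_l = 0$ (handled by $\delta(0) = 0$) and the observation that $x_1 + x_2$ is a nonnegative multiple of $u$. Finally, since the argument uses only the inequality $|a + b| \leq 2 - 2\delta(|a - b|)$ for unit vectors, it remains valid for any admissible choice of the function $\delta$ described after Definition \ref{def:UniformlyConvex}, not merely the modulus of convexity.
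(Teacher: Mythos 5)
Your argument is correct and complete: the identity $(x_1+|x_1|u)+(x_2+|x_2|u)=(|x_1+x_2|+|x_1|+|x_2|)u$ together with $|x_l(e_l+u)/|x_l||=|e_l+u|\leq 2-2\delta(\alpha_l)$ yields \eqref{eq:ClarkTriangle} after cancelling $|x_1|+|x_2|$, and you correctly handle the degenerate case $\alpha_l=0$ via $\delta(0)=0$. The paper itself gives no proof of this statement (it is quoted from Clarkson's paper), but your reduction --- comparing each summand against the common unit direction $u$ and invoking uniform convexity once per term --- is precisely the ``simple proof'' alluded to in the paper's remark following the theorem, and it visibly extends to finitely many nonzero summands as in Clarkson's original formulation.
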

The original formulation of Clarkson's theorem is for finitely many non-zero terms 
(whose sum is not zero too) in 
a uniformly convex Banach space. An examination of the (simple) proof shows that the 
theorem actually holds in any normed space, not necessarily uniformly convex or Banach. 
However, it seems less useful in general normed spaces where it may happen that $\delta(\epsilon)=0$ 
even when $\epsilon>0$. Inequality \eqref{eq:ClarkTriangle} can obviously be extended to the case of zero terms by 
letting $\alpha(0,x):=0=:\alpha(x,0)$ for all $x$, but no use of this extension will be made here.

\section{The main result}\label{sec:UniformlyConvex}
In this section the main result (namely Theorem \ref{thm:InteriorBoundary} below) is proved. 
The proof is also based on  a simple lemma which is proved for the sake of completeness. 
Before stating both, here are a few words about the (standard) notation used below: 
$B(x,r)$ denotes the open ball with radius $r>0$ and center at $x\in X$; 
given points $a,b\in X$, the segments $[a,b]$ and $[a,b)$ denote the sets 
 $\{a+t(b-a): t\in [0,1]\}$ and $\{a+t(b-a): t\in [0,1)\}$ respectively; 
given a subset $S$ of $X$, its 
complement, closure, interior, boundary, and exterior (with respect to $X$) are  
 respectively $S^c$,$\overline{S},\Int(S),\partial (S)$ and $\Ext(S):=(\overline{S})^c$; 
 given a norm $|\cdot|$, the induced metric is $d(x,y)=|x-y|$. Given $f:X\to\R$, recall that 
$f^{\leq 0}=\{x\in X: f(x)\leq 0\}$ and  $f^{=0}=\{x\in X: f(x)=0\}$. Similarly  $f^{<0}$, 
$f^{\geq 0}$, and $f^{>0}$ are defined. 

\begin{lem}\label{lem:dom}
Let $(X,\tau)$ be a topological space let $f:X\to\R$.  If $f^{\leq 0}$ is closed and $f^{<0}$ is open, 
then:
\begin{enumerate}[(a)]
\item\label{item:Int} $f^{<0}\subseteq \Int(f^{\leq 0})$;
\item\label{item:Ext} $f^{>0}=\Ext(f^{\leq 0})$;
\item\label{item:boundary} $\partial(f^{\leq 0})\subseteq f^{=0}$;
\item\label{item:IntBoundary} equality holds in \eqref{item:Int} if and only  it holds in \eqref{item:boundary};
\item\label{item:Closure_dom(A,P)} if $\Int(f^{\geq 0})=f^{>0}$, then $f^{\leq 0}=\overline{f^{<0}}$;
\end{enumerate}
In particular, the above items hold when $f$ is continuous.
\end{lem}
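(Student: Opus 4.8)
The plan is to derive all five items from two elementary decompositions of the set $f^{\leq 0}$, which is closed by hypothesis and hence equals its own closure. On the one hand, for any subset $S$ of a topological space one has $\overline{S}=\Int(S)\sqcup\partial(S)$, so $f^{\leq 0}=\Int(f^{\leq 0})\sqcup\partial(f^{\leq 0})$. On the other hand, the trichotomy of the sign of $f$ gives the disjoint decomposition $f^{\leq 0}=f^{<0}\sqcup f^{=0}$. Items \eqref{item:Int} and \eqref{item:Ext} then need nothing beyond the definitions: $f^{<0}$ is an open subset of $f^{\leq 0}$, hence is contained in the largest open subset $\Int(f^{\leq 0})$; and $\Ext(f^{\leq 0})=(\overline{f^{\leq 0}})^{c}=(f^{\leq 0})^{c}=f^{>0}$ because $f^{\leq 0}$ is closed.

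For \eqref{item:boundary} I would compute $\partial(f^{\leq 0})=\overline{f^{\leq 0}}\setminus\Int(f^{\leq 0})=f^{\leq 0}\setminus\Int(f^{\leq 0})$ and then subtract the inclusion of \eqref{item:Int}: from $f^{<0}\subseteq\Int(f^{\leq 0})$ one obtains $\partial(f^{\leq 0})\subseteq f^{\leq 0}\setminus f^{<0}=f^{=0}$. For \eqref{item:IntBoundary}, the observation is that, within the set $f^{\leq 0}$, the subset $f^{<0}$ has complement $f^{=0}$ and the subset $\Int(f^{\leq 0})$ has complement $\partial(f^{\leq 0})$; since passing to the complement inside $f^{\leq 0}$ is an order-reversing involution on the subsets of $f^{\leq 0}$, it sends equal sets to equal sets and unequal sets to unequal sets, so $f^{<0}=\Int(f^{\leq 0})$ holds exactly when $f^{=0}=\partial(f^{\leq 0})$ holds (the already-established one-sided inclusions of \eqref{item:Int} and \eqref{item:boundary} being used here and nothing more).

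For \eqref{item:Closure_dom(A,P)}, one inclusion is free: $\overline{f^{<0}}\subseteq f^{\leq 0}$ since $f^{\leq 0}$ is a closed set containing $f^{<0}$. For the reverse it suffices, by the sign decomposition, to show $f^{=0}\subseteq\overline{f^{<0}}$, and I would argue this by contradiction. If a point $x\in f^{=0}$ were not in $\overline{f^{<0}}$, it would possess an open neighborhood $U$ disjoint from $f^{<0}$, i.e., $U\subseteq f^{\geq 0}$; hence $x\in\Int(f^{\geq 0})$, which by the hypothesis of \eqref{item:Closure_dom(A,P)} equals $f^{>0}$, contradicting $f(x)=0$. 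Finally, the concluding assertion is immediate: if $f$ is continuous then $f^{\leq 0}$ is the preimage of the closed ray $(-\infty,0]$ and $f^{<0}$ the preimage of the open ray $(-\infty,0)$, so the standing hypotheses hold and \eqref{item:Int}--\eqref{item:Closure_dom(A,P)} apply verbatim, the conditional form of \eqref{item:Closure_dom(A,P)} being unaffected. I do not anticipate a genuine obstacle in this lemma; the only point that deserves attention is the bookkeeping in \eqref{item:IntBoundary}, namely to invoke the one-sided inclusions from \eqref{item:Int} and \eqref{item:boundary} and avoid inadvertently assuming what is being proved.
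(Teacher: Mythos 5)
Your proposal is correct and follows essentially the same route as the paper: items (a)--(d) use the identical decompositions, and your pointwise contradiction for (e) is just an unwinding of the paper's duality $(\Int(S))^{c}=\overline{S^{c}}$ applied to $S=f^{\geq 0}$. You also correctly note that the hypothesis of (e) is not automatic for continuous $f$ (the paper's Remark~\ref{rem:f_closure_boundary} gives a counterexample), so the concluding sentence of the lemma must be read as keeping (e) conditional.
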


\begin{proof}
\begin{enumerate}[(a)]
\item Since $f^{<0}$ is open by assumption and it 
is contained in $f^{\leq 0}$, the assertion follows from the fact that $\Int(f^{\leq 0})$ is the union of 
all the open subsets of $f^{\leq 0}$. 
\item $\Ext(f^{\leq 0}):=\left(\overline{f^{\leq 0}}\right)^c=\left(f^{\leq 0}\right)^c=f^{>0}$ by definition and 
because $f^{\leq 0}$ is assumed to be closed.
\item Suppose that $x\in \partial(f^{\leq 0})$. Then $x$ cannot belong to  
 $f^{<0}$ which is contained in $\Int(f^{\leq 0})$ by part \eqref{item:Int}, and cannot
belong to  $f^{>0}=\Ext(f^{\leq 0})$ by part \eqref{item:Ext}. Thus $x\in f^{=0}$.
\item Follows from $\partial(f^{\leq 0})\bigcup \Int(f^{\leq 0})=\overline{f^{\leq 0}}=f^{\leq 0}=f^{=0}\bigcup f^{<0}$  
and the fact that the terms in both unions are disjoint. 
\item Follows from $f^{<0}=(f^{\geq 0})^c$ and $(\Int(S))^c=\overline{S^c}$ for each $S\subseteq X$. 
\end{enumerate}
\end{proof}
\begin{remark}\label{rem:f_closure_boundary}
Neither the equality $f^{\leq 0}=\overline{f^{<0}}$ nor the equality 
$\partial(f^{\leq 0})=f^{=0}$ imply each other. A simple counterexample 
to the first case is obtained from the function $f:\R\to\R$ defined by 
$f(x)=-|x|$ if $|x|\leq 1$ and $f(x)=|x|-2$ when $|x|\geq 1$. The 
problem is with $x=0$. A counterexample to the second case is $g=-f$.  
\end{remark}

\begin{thm}\label{thm:InteriorBoundary} 
Let $X$ be a convex subset of a uniformly convex normed space $(\widetilde{X},|\cdot|)$.  Let 
$P,A\subseteq X$ be nonempty and suppose that $d(P,A)>0$. Then \eqref{eq:boundary}, \eqref{eq:interior}, 
and \eqref{eq:closure} hold.
\end{thm}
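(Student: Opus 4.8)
The plan is to derive the three identities from Lemma~\ref{lem:dom} applied to the function $f(x):=d(x,P)-d(x,A)$. Since each of $x\mapsto d(x,P)$ and $x\mapsto d(x,A)$ is $1$-Lipschitz, $f$ is continuous, so $f^{\leq 0}=\dom(P,A)$ is closed, $f^{<0}$ is open, and parts (a)--(e) of the lemma are available. Part (a) already gives $f^{<0}\subseteq\Int(f^{\leq 0})$, so \eqref{eq:interior} is equivalent to the assertion that no point $z$ with $d(z,P)=d(z,A)$ lies in $\Int(\dom(P,A))$. Granting this, \eqref{eq:boundary} follows from part (d), and \eqref{eq:closure} from part (e): indeed $f^{\geq 0}=\dom(A,P)$ and $f^{>0}=\{x\in X: d(x,A)<d(x,P)\}$, so the hypothesis $\Int(f^{\geq 0})=f^{>0}$ of part (e) is precisely \eqref{eq:interior} for the interchanged pair $(A,P)$, which is admissible since $d(A,P)=d(P,A)>0$. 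Thus everything reduces to the single claim: \emph{if $z\in X$ and $d(z,P)=d(z,A)$, then $z\notin\Int(\dom(P,A))$.}

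To prove this claim I argue by contradiction. Assume $B(z,r)\cap X\subseteq\dom(P,A)$ for some $r>0$, i.e. $d(x,P)\leq d(x,A)$ for all $x\in B(z,r)\cap X$. Put $\rho:=d(z,P)=d(z,A)$; note $\rho>0$, for otherwise $z\in\overline P\cap\overline A$ and then $d(P,A)=0$. Fix a small $\eta\in(0,d(P,A))$ and choose $a\in A$ with $|z-a|<\rho+\eta$. For $t\in(0,1)$ let $x_t:=(1-t)z+ta$, a point on the segment $[a,z)$ close to $z$; it lies in $X$ by convexity, and for $t$ small enough $x_t\in B(z,r)\cap X$, hence $d(x_t,P)\leq d(x_t,A)\leq|x_t-a|=(1-t)|z-a|<(1-t)(\rho+\eta)$. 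If $d(x_t,P)=0$ then $x_t\in\overline P$, so $d(x_t,A)\geq d(P,A)>0=d(x_t,P)$, already a contradiction; otherwise pick $p\in P$ with $0<|x_t-p|<(1-t)(\rho+\eta)$. A short estimate using $z-x_t=t(z-a)$ then shows $|z-p|\in[\rho,\rho+\eta)$, matching $|z-a|\in[\rho,\rho+\eta)$.

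The crux is to apply Clarkson's strong triangle inequality (Theorem~\ref{thm:Clarkson}) to the splitting $z-p=(x_t-p)+(z-x_t)$ with $z-x_t=t(z-a)$, which yields
\[
\rho\leq|z-p|\leq|x_t-p|+t|z-a|-2\delta(\gamma_1)|x_t-p|-2\delta(\gamma_2)\,t|z-a|,
\]
where $\gamma_1=\alpha(x_t-p,\,z-p)$ and $\gamma_2=\alpha(z-a,\,z-p)$. The key geometric fact is that $\gamma_2$ is bounded below uniformly in $t$: from $|p-a|\geq d(P,A)$ together with $|z-p|,|z-a|\in[\rho,\rho+\eta)$, expanding $p-a=(z-a)-(z-p)$ in terms of the unit vectors of $z-a$ and $z-p$ gives $\gamma_2\geq\epsilon_0:=(d(P,A)-\eta)/(\rho+\eta)$, and for $\eta$ small $\epsilon_0$ stays above a fixed positive constant, so $\delta(\gamma_2)\geq\delta_0>0$ by monotonicity of $\delta$. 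Dropping the nonnegative term $2\delta(\gamma_1)|x_t-p|$ and using $\rho\leq|z-a|<\rho+\eta$ and $|x_t-p|<(1-t)(\rho+\eta)$, the displayed inequality collapses to $2\delta_0\,t\rho<\eta$. Since both $\eta$ and $t$ are at our disposal, I first fix $\eta$ small enough that all the smallness requirements hold and that $\eta/(2\delta_0\rho)<\min\{1,\,r/(\rho+\eta)\}$, and then set $t:=\eta/(2\delta_0\rho)$; this makes $2\delta_0 t\rho=\eta$, contradicting $2\delta_0 t\rho<\eta$. Hence $z\notin\Int(\dom(P,A))$, and the theorem follows.

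I expect the main obstacle to be precisely the passage from unattained to near-attained nearest points: one must notice that any $p\in P$ which is almost optimal for $x_t$ is automatically almost optimal for $z$ as well — this is what legitimizes $|z-p|\in[\rho,\rho+\eta)$ and hence the lower bound $\gamma_2\geq\epsilon_0>0$ — and one must balance the two ``error budgets'', the slack $\eta$ in the choice of $a$ and the step length $t$, so that the uniform-convexity gain $2\delta_0 t\rho$ dominates the loss $\eta$. This is exactly where genuine uniform convexity (rather than mere strict convexity, which would suffice if the distances were attained, as in finite dimensions) enters, through the positivity of $\delta$ on $(0,2]$. The remaining ingredients — continuity of $f$, the reductions via Lemma~\ref{lem:dom}, convexity of $X$ keeping $x_t$ inside $X$, and the elementary inequalities — are routine.
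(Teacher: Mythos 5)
Your proof is correct and follows the same overall strategy as the paper's: the reduction of \eqref{eq:boundary}--\eqref{eq:closure} to the single claim that no bisector point lies in $\Int(\dom(P,A))$ via Lemma~\ref{lem:dom} (including the swap of $P$ and $A$ needed for part~(e)) is exactly the paper's reduction, and the refutation of an interior bisector point $z$ by stepping along a segment $[a,z]$ toward a near-nearest $a\in A$ and invoking Clarkson's inequality is the paper's core idea. The organization of the Clarkson step differs in a way worth noting: the paper fixes the step length at $\epsilon/2$, decomposes $z-q=(z-x)+(x-q)$ for a near-minimizer $q$ of $d(x,\cdot)$ over $P$, and runs a dichotomy on the angle $\alpha(z-x,z-q)$ (Case~1: the angle is large, Clarkson gives the contradiction; Case~2: the angle is small, whence $|a-q|<d(P,A)$, contradiction). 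You instead prove the angle lower bound $\gamma_2\geq(d(P,A)-\eta)/(\rho+\eta)$ directly --- which is the paper's Case~2 run contrapositively, resting on the same observation that a near-minimizer for $x_t$ is automatically a near-minimizer for $z$ --- and then tune the step length $t$ against the slack $\eta$; this eliminates the case split and is, if anything, cleaner. One step is wrong as written: in the degenerate case $d(x_t,P)=0$ you assert that $d(x_t,A)\geq d(P,A)>0=d(x_t,P)$ is ``already a contradiction'', but strict inequality at $x_t$ is perfectly consistent with $x_t\in\dom(P,A)$, so nothing is contradicted. The case is still trivial, just for a different reason: $x_t\in\overline{P}$ gives $\rho=d(z,P)\leq|z-x_t|<t(\rho+\eta)<\rho$ once $t<\rho/(\rho+\eta)$, which your eventual choice of small $t$ ensures. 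With that justification repaired (and the easy check, made explicitly in the paper, that the argument of $\delta$ stays below $2$, which follows from $d(P,A)\leq 2\rho$), your argument is complete.
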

\begin{proof}
It is possible to write $\dom(P,A)=f^{\leq 0}$ where $f:X\to \R$ is the continuous function defined by $f(x)=d(x,P)-d(x,A)$ 
for each $x\in X$. 
It suffices to show that \eqref{eq:boundary} holds. Indeed, \eqref{eq:interior}  will be a obtained as a consequence of 
Lemma \ref{lem:dom}\eqref{item:IntBoundary}. But then, by noticing that obviously $d(A,P)>0$, it will be 
possible to obtain  \eqref{eq:boundary} and \eqref{eq:interior} with the roles of $P$ and $A$ exchanged, i.e., 
 with $g=-f$ instead of $f$. This and Lemma ~\ref{lem:dom}\eqref{item:Closure_dom(A,P)} (still applied to $f(x)=d(x,P)-d(x,A)$) 
will imply \eqref{eq:closure}. 

It will now be shown that \eqref{eq:boundary} holds. The  inclusion $\partial(\dom(P,A))\subseteq \{x\in X: d(x,P)=d(x,A)\}$ 
is implied by Lemma \ref{lem:dom}\eqref{item:boundary}. For the converse one, let $z$ be in the set $\{x\in X: d(x,P)=d(x,A)\}$.
Assume by way of contradiction that $z\notin \partial(\dom(P,A))$. It must be that $d(z,P)>0$, because otherwise 
$d(z,A)=d(z,P)=0$ and hence $d(P,A)=0$, a contradiction.  
Since $z\in \dom(P,A)\backslash\partial\dom(P,A)$ it follows that $z\in \Int(\dom(P,A))$. Thus there exists 
$\epsilon\in (0,d(z,P))$ such that the ball $B(z,\epsilon)$ is contained in $\dom(P,A)$. Let $\sigma\in (0,\infty)$ be 
arbitrary and let
\begin{equation}\label{eq:r}
r=\displaystyle{\min\left\{\sigma,\frac{d(P,A)}{4}, \frac{\epsilon}{2}\cdot\delta\left( \frac{d(P,A)}{4(\sigma+d(z,A))}\right)\right\}}. 
\end{equation}

For $r$ to be well defined the argument inside $\delta$ should be at most 2 (see page \pageref{eq:delta}). This is true without any 
assumption on $\sigma$. Indeed, given $\tau>0$ arbitrary, let $a'\in A$ and $p'\in P$ 
satisfy $d(z,a')<d(z,A)+\tau$ and $d(z,p')<d(z,P)+\tau$. The triangle inequality and the equality $d(z,P)=d(z,A)$ 
imply that $d(P,A)\leq d(p',a')\leq 2d(z,A)+2\tau$, and since $\tau$ was arbitrary this 
 implies that $d(P,A)/(4(d(z,A)+\sigma))<0.5<2$. 

Now let $a\in A$ and $p\in P$ satisfy 
\begin{equation}\label{eq:ap}
d(z,a)<r+d(z,A)\quad \text{and}\quad d(z,p)<d(z,P)+r/10. 
\end{equation}
From the choice of $a,p,z$, and $\epsilon$, 
\begin{equation}\label{eq:epsilon_z_p_a} 
\epsilon<d(z,P)\leq d(z,a)<r+d(z,A)=r+d(z,P)\leq r+d(z,p).
\end{equation}
By \eqref{eq:epsilon_z_p_a} the length of the segment $[a,z]$  is greater than $\epsilon$. 
Let $x\in [a,z]\subset X$ be such that $d(x,z)=\epsilon/2$. Then $x\in B(z,\epsilon)\subseteq \dom(P,A)$ and hence 
$d(x,P)\leq d(x,a)$. Let $q\in P$ satisfy $d(x,q)\leq d(x,P)+r/10$. By the above 
\begin{equation}\label{eq:dxqxa}
d(x,q)\leq d(x,P)+r/10\leq d(x,a)+r/10. 
\end{equation}
By the choice of $p$ and $\epsilon$ 
\begin{equation}\label{eq:d(x,z)d(z,q)}
d(x,z)=\epsilon/2<\epsilon<d(z,p)\leq d(z,P)+r/10\leq d(z,q)+r/10. 
\end{equation}
This and the fact that $\epsilon-r/10>\epsilon/2$ imply that $q\neq x$ and $q\neq z$. 
In addition, because of \eqref{eq:dxqxa}, $x\in [a,z]$, \eqref{eq:ap}, and \eqref{eq:r} it follows that 
\begin{multline}\label{eq:d(q,z)}
d(q,z)\leq d(q,x)+d(x,z)\leq d(a,x)+d(x,z)+r/10 \\ 
=d(z,a)+r/10<d(z,A)+\sigma+r/10. 
\end{multline}
For arriving at the desired contradiction distinguish between two cases. 

{\noindent\bf Case 1:} The angle $\alpha(z-x,z-q)$ satisfies the inequality  
\begin{equation}\label{eq:alpha}
\alpha(z-x,z-q)\geq d(P,A)/(4(\sigma+d(z,A))). 
\end{equation}
In this case by \eqref{eq:ap}, the strong triangle inequality \eqref{eq:ClarkTriangle}, by \eqref{eq:dxqxa}, 
 by $2d(z,x)=\epsilon$,  by \eqref{eq:alpha}, by the monotonicity of $\delta$, by $x\in [a,z]$, by \eqref{eq:r}, and by \eqref{eq:epsilon_z_p_a} it follows that 
\begin{multline*}
d(z,p)\leq d(z,P)+r/10\leq |z-q|+r/10\\
\leq r/10+|z-x|+|x-q|-2|z-x|\delta(\alpha(z-x,z-q))-2|x-q|\delta(\alpha(x-q,z-q))\\
\leq r/10+|z-x|+|x-a|+r/10-\epsilon\delta(d(P,A)/(4(\sigma+d(z,A))))\\
\leq |z-a|+r/5-2r\leq d(z,p)+r-9r/5<d(z,p),
\end{multline*}
a contradiction. All the angles are well defined because $z\neq x$, $z\neq q$ and $q\neq x$.

{\noindent \bf Case 2:} Inequality \eqref{eq:alpha} does not hold. Let $\theta=(q-z)/|q-z|$ and $\phi=(x-z)/|x-z|$. Then $q=z+|q-z|\theta$. Since $x\in [a,z]$ it follows that $a=z+|a-z|\phi$ and 
\begin{equation}\label{eq:phi_theta}
|\phi-\theta|=|(-\phi)-(-\theta)|=\alpha(z-x,z-q). 
\end{equation}
Let $s=d(z,a)-d(z,q)$. By \eqref{eq:d(q,z)} it follows that $s\geq -r/10$. 
By \eqref{eq:epsilon_z_p_a}, \eqref{eq:ap}, and $q\in P$ 
\begin{equation}
d(z,a)-r<d(z,p)\leq d(z,P)+r/10\leq d(z,q)+r/10. 
\end{equation}
This and \eqref{eq:r} imply that $s<11r/10\leq  11d(P,A)/40$. By combining this with $s\geq -r/10$ 
we see that $|s|\leq 11d(P,A)/40$. By this inequality, the definition of $s$, by \eqref{eq:d(q,z)}, by \eqref{eq:phi_theta}, 
since \eqref{eq:alpha} does not hold,  and since $r/10<\sigma+d(z,A)$, 
\begin{multline*}
|a-q|=\left|(z+|a-z|\phi)-(z+|q-z|\theta)|=|(s+|q-z|)\phi-(|q-z|\theta)\right|\\
\leq |s||\phi| +|q-z||\theta-\phi| < \frac{11d(P,A)}{40}+\frac{(d(z,A)+\sigma+r/10)d(P,A)}{4(\sigma+d(z,A))}\\
\leq d(P,A)(11/40+1/2)<d(P,A),
\end{multline*}
a contradiction because $q\in P$ and $a\in A$. This contradiction and the previous established one show 
that the assumption $z\not\in \partial (\dom(P,A))$ is false and prove \eqref{eq:boundary}  
(and \eqref{eq:interior}-\eqref{eq:closure}). 
\end{proof}

\section{Examples and Counterexamples}\label{sec:Counterexamples} 
This section presents a few examples and counterexamples related to Theorem ~\ref{thm:InteriorBoundary}.

\begin{expl}\label{ex:Illustrations}
Illustrations of Theorem \ref{thm:InteriorBoundary}  are given in 
Figures \ref{fig:BoundaryInteriorClassicalVoronoi6Site} and \ref{fig:BoundaryInterior4Site2PerSiteL2718281828}. 
In both figures the Voronoi diagrams of several sites $(P_k)_{k\in K}$ are presented and the corresponding 
dominance regions are the Voronoi cells $\dom(P_k,A_k)$, $A_k=\bigcup_{j\neq k}P_j$. In Figure \ref{fig:BoundaryInteriorClassicalVoronoi6Site} the setting is a square in $(\R^2,\ell_2)$ and each site is a point, and in Figure \ref{fig:BoundaryInterior4Site2PerSiteL2718281828} the setting is a square in $(\R^2,\ell_{p}),\,p\approx 2.71$ and each site has two points.  
\end{expl}
\begin{figure}[t]
\begin{minipage}[t]{0.45\textwidth}
\begin{center}
{\includegraphics[scale=0.72]{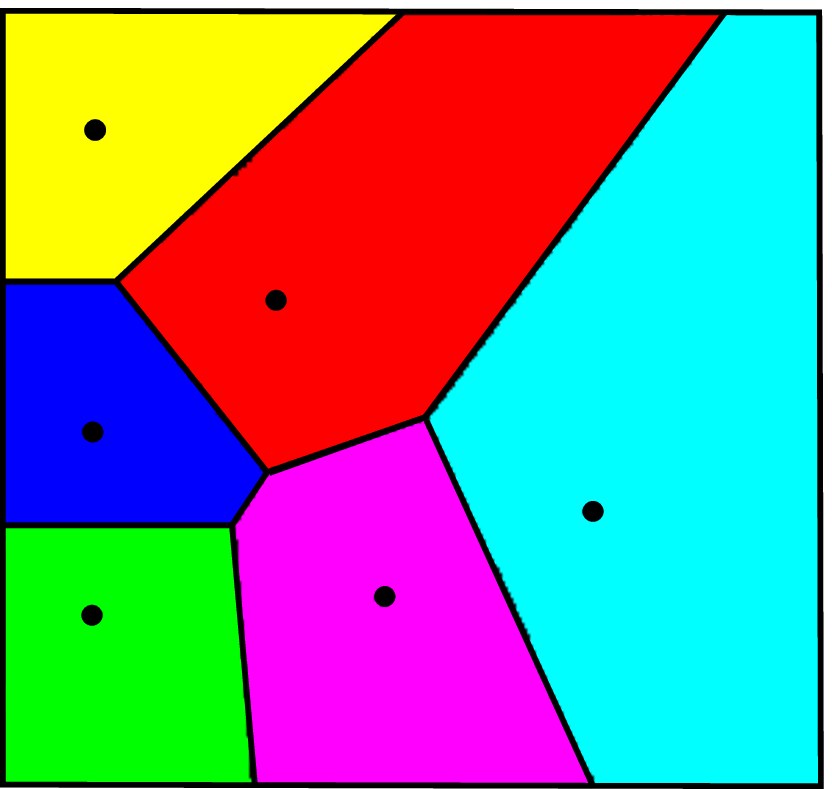}}
\end{center}
 \caption{The Figure of Example~ \ref{ex:Illustrations} (Euclidean norm).}
\label{fig:BoundaryInteriorClassicalVoronoi6Site}
\end{minipage}
\hfill
\begin{minipage}[t]{0.45\textwidth}
\begin{center}
{\includegraphics[scale=0.72]{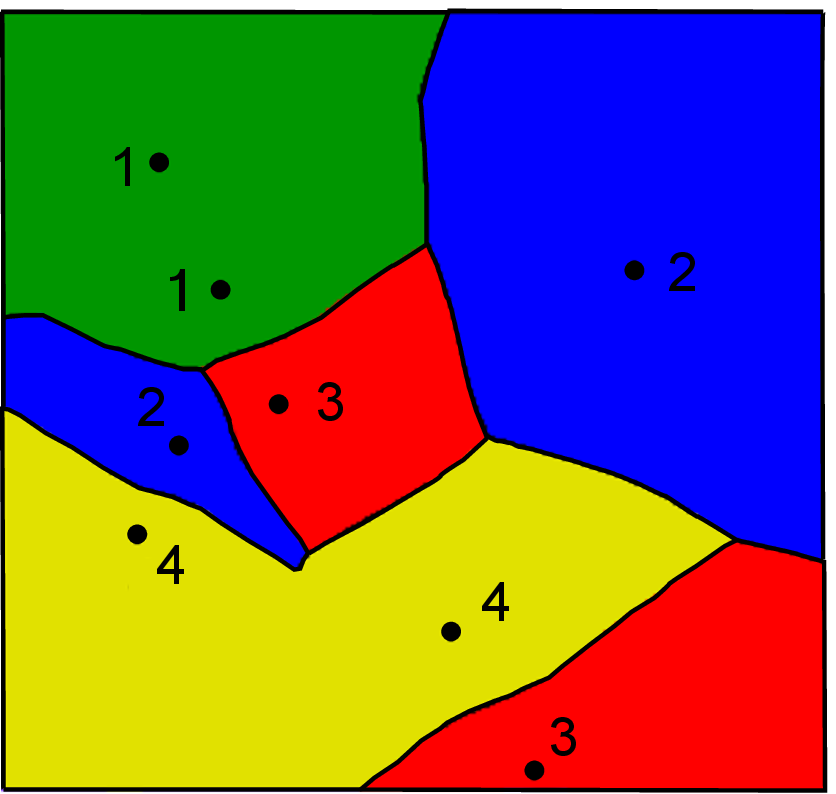}}
\end{center}
 \caption{The figure of Example ~\ref{ex:Illustrations} (the $\ell_p$ norm, $p\approx 2.71$).}
\label{fig:BoundaryInterior4Site2PerSiteL2718281828}
\end{minipage}
\hfill
\begin{minipage}[t]{0.45\textwidth}
\begin{center}
{\includegraphics[scale=0.72]{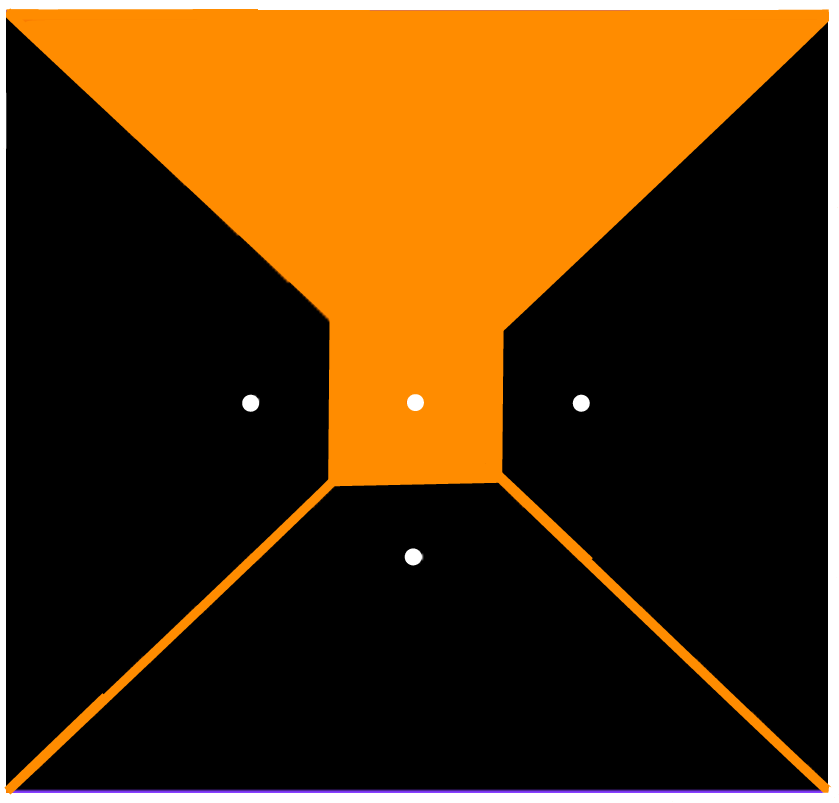}}
\end{center}
 \caption{The setting of Example ~\ref{ex:ParallelSegment}. The shown shape is the 
 Voronoi cell of $P=\{(0,0)\}$ with respect to  $A=\{(-2,0),(2,0),(0,-2)\}$, in a square in 
 $(\R^2,\ell_{\infty})$.}
\label{fig:BoundaryInteriorLinftyCell0}
\end{minipage}
\hfill
\begin{minipage}[t]{0.45\textwidth}
\begin{center}
{\includegraphics[scale=0.72]{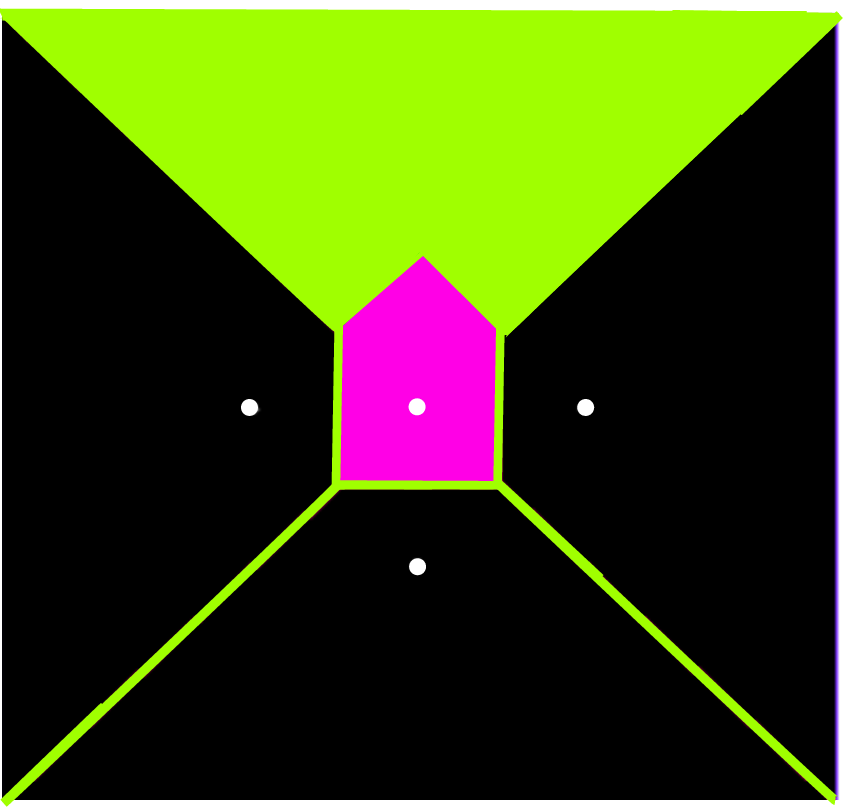}}
\end{center}
 \caption{The setting of Figure \ref{fig:BoundaryInteriorLinftyCell0} and Example \ref{ex:ParallelSegment}. 
 The set of strict inequality is the small purple ``house'' (pentagon) in the middle. The set of equality (the full green 
 ``W'', the sides of the house, and the rays)  contains a large part of the interior.} 
\label{fig:BoundaryInteriorLargeBisectorSmallStrictInequality}
\end{minipage}
\end{figure}

\begin{expl}\label{ex:ParallelSegment}
When the space is not uniformly convex, then the conclusion of Theorem ~\ref{thm:InteriorBoundary} does not 
necessarily hold. 
Indeed, consider $\dom(P,A)$ where $P=\{(0,0)\}$ and $A=\{(-2,0),(2,0),(0,-2)\}$, 
in the square $[-5,5]^2$ in $(\R^2,\ell_{\infty})$. See Figure \ref{fig:BoundaryInteriorLinftyCell0}. 
The set of strict inequality is relatively small (the ``house'' around $P$) compared to the 
set where equality holds (the bisector). This latter set contains a large part of the interior. 
See Figure \ref{fig:BoundaryInteriorLargeBisectorSmallStrictInequality}. 
In fact, when $X=\R^2$, then the former set remains the same (and hence bounded) 
while the latter grows and it is not bounded. A closely related example is 
 \cite[Example 4]{Horvath2000}. The setting there is the lattice of point sites 
 generated by the vectors $(2,0), (0,8)$ in $(\R^2,\ell_{\infty})$ 
and the considered cell is of the site $P=\{(0,0)\}$. In this case  $A$ is the set of all other sites. 
The resulting Voronoi cell is bounded and it is the union of a small middle hexagon (strict inequality) 
and large concave pentagons (bisector). The  difference between the example of 
Figure ~\ref{fig:BoundaryInteriorLargeBisectorSmallStrictInequality} (which was discovered before the 
author become aware to  \cite[Example 4]{Horvath2000}) and  \cite[Example 4]{Horvath2000} is that in 
 \cite[Example 4]{Horvath2000} no rays appear as in the case of  Figure \ref{fig:BoundaryInteriorLinftyCell0}. 
Additional related examples can be found in \cite[p. 390, Figure 37]{Aurenhammer} 
($P=\{(-1,1)\}$, $A=\{(1,-1)\}$ in $(\R^2,\ell_1)$)  and \cite[p. 605, Fig. 1(b)]{Lee}, \cite[p. 191, Figure 3.7.2]{OBSC} 
($P=\{(-1,-1)\}$, $A=\{(1,1)\}$ in $(\R^2,\ell_1)$) where, for instance, the bisector in the first case is 
$\left((-\infty,-1]\times [1,\infty)\right)\cup \{(t,-t): t\in [-1,1]\}\cup \left([1,\infty)\times (-\infty,-1]\right)$. 
Now the set of strict inequality is not bounded. 
%
%

\end{expl}

\begin{expl}\label{ex:NotAttained} 
This example shows that the condition $d(P,A)>0$ in Theorem \ref{thm:InteriorBoundary} cannot be weakened to 
 $P\cap A=\emptyset$ without further assumptions.  Indeed,  let $(\wt{X},|\cdot|)$ be the infinite 
  dimensional Hilbert space $\ell_2$. Let $X=\wt{X}$ and  
\begin{equation*} 
P=\{e_1\}\cup \{((n+1)/n)e_{n}: n=2,3,4,\ldots\}, \quad A=\{((n+2)/n)e_{n}: n=2,3,4,\ldots\},
\end{equation*} 
where $e_n$ is the $n$-th element in the standard 
basis, i.e., its $n$-th component is 1, and the other components are 0. 
For $z=0$ the equality $1=d(z,e_1)=d(z,P)=d(z,A)$ holds. 
 However, $z$ is in the interior of $\dom(P,A)$ since a simple 
check shows that the ball $B(z,0.1)$ is contained in $\dom(P,A)$. Thus \eqref{eq:boundary} 
does not hold. 

\end{expl}

\begin{expl}\label{ex:PA_Not_Disjoint}
This example shows that if $P\cap A\neq \emptyset$, then Theorem \ref{thm:InteriorBoundary} may be  
violated even in the case of a 2-dimensional space (in contrast to the case where $P\cap A=\emptyset$ 
as mentioned in Subsection \ref{subsec:IntroductionPhenomenon}). 
Indeed, let $(\wt{X},|\cdot|)$ be $\R^2$ with the Euclidean norm and let $X=\R^2$. 
Let $P=\{(-10,0),(0,0)\}$ and $A=\{(0,0), (10,0)\}$. Let $S=[-1,1]\times \R$. 
Then $S\subset [-2,2]\times \R \subseteq \dom(P,A)$. Thus $S\subset \Int(\dom(P,A))$. But 
$d(z,P)=d(z,A)=d(z,(0,0))$ for each $z\in S$. Therefore \eqref{eq:boundary} does not hold. 
\end{expl} 

\section{Concluding remarks}\label{sec:ConcludingRemarks}
It may be of interest to further investigate the phenomenon described in 
this note in various domains of mathematics and to find interesting 
applications of it. Perhaps a discontinuous version related to the phenomenon 
can be formulated (a simple example where this holds: 
let $C=f^{\leq 0}$ where $f:\R^n\to\R$ is defined as $0$ on the boundary 
of the unit ball, arbitrarily positive outside the ball and arbitrarily 
negative inside the ball). This may help in the study of singularities 
of the boundary. 

Another possible direction for future investigation is to  weaken the assumption 
of uniform convexity to strict convexity 
(the unit sphere does not contain line segments but, in contrast to 
uniform convexity, now there is no uniform bound $\delta(\epsilon)>0$ 
on how much the midpoint $(x+y)/2$ should penetrate the unit ball 
assuming $|x|=|y|=1$ and $|x-y|\geq \epsilon$). 
We conjecture that in this case there are 
counterexamples to Theorem ~\ref{thm:InteriorBoundary}. 
Alternatively, one may try to work with general normed spaces (under additional 
assumptions on the sites) or with spaces which are not linear. 
As a matter of fact, recently 
\cite{ReemVorStabilityNonUC2012,ReemZoneCompute} certain related 
results have been obtained. In the first paper (See Section 7 in the 
current arXiv version) a closely related 
 result (Lemma 9.11) is used  as a tool for proving the 
geometric stability of Voronoi cells with respect to small changes 
 of the sites in normed spaces which are not uniformly convex,  
 under some assumptions on the relation between the structure of 
 the unit sphere and the configuration of the sites. 
In the second paper (see Section 7) again a closely related  result is used for proving the 
convergence of an iterative scheme for computing a certain geometric 
object in a class of geodesic metric spaces.  
However, in both cases the distance between any  point in the space 
and both sites $P$ and  $A$ is assumed to be attained and hence the case 
of arbitrary sites in an infinite dimensional setting is not in the scope 
of these results.  

Finally, studying sets represented by a system of inequalities instead of 
one inequality may be valuable, because, for instance, sets having this 
form appear frequently in optimization \cite{Bertsekas1999,BorweinLewis2006,Rockafellar1970}. 
In the case of Voronoi cells $R_k=\dom(P_k,\cup_{j\neq k}P_j)$ one observes that 
the cell is nothing but the sets of all points $x$ satisfying the system of inequalities 
$f_j(x)\leq0$ where $f_j(x)=d(x,P_k)-d(x,P_j)$ for all  $j\in K, j\neq k$. A simple check shows 
that $d(x,\cup_{j\neq k}P_j)=\inf\{d(x,P_j): j\neq k\}$ and hence, when $K$ is finite, 
one concludes from Theorem \ref{thm:InteriorBoundary} that $x\in\partial R_k$ if 
and only if $x$ satisfies the above system of inequalities and at least one inequality 
is equality, and $x\in \Int(R_k)$ if and only if $x$ satisfies the system of inequalities 
with strict inequalities. 
\bibliographystyle{amsplain}
\bibliography{biblio}

\end{document}